\documentclass[12pt]{article}
\usepackage{amsmath,amssymb,amsthm}

\newtheorem{definition}{Definition}
\newtheorem{theorem}[definition]{Theorem}

\newtheorem{remark}[definition]{Remark}

\newtheorem*{definition*}{Definition}
\newtheorem*{theorem*}{Theorem}
\newtheorem*{proposition*}{Proposition}
\newtheorem*{example*}{Example}
\newtheorem*{exercise*}{Exercise}
\newtheorem*{corollary*}{Corollary}
\newtheorem*{remark*}{Remark}

\usepackage{geometry}
\usepackage[shortlabels, inline]{enumitem}
\usepackage{bm}
\usepackage{graphicx}
\usepackage{float}

\allowdisplaybreaks[2]
\usepackage{color}
\usepackage{mathtools}
\mathtoolsset{showonlyrefs=true}
\usepackage{url}

\begin{document}


\begin{center}
  ~\vspace{20pt}
  
  \Large 
  An alternative formulation of the discrete-time fractional Poisson process

  \vspace{20pt}
  
  \large
  Naohiro Yoshida

  \normalsize
  Department of Economics,
  Keiai University

  1-5-21,
  Anagawa, Inage,
  263-8588,
  Chiba,
  Japan

  E-mail:
  \url{n-yoshida@u-keiai.ac.jp}

\end{center}

  \vspace{20pt}

\noindent
  MSC2020:
  60G22, 60G55, 60K05
  
\noindent
  Keywords:
  Discrete-time fractional Poisson process, Discrete Mittag-Leffler distribution, Renewal process, Probability generating function

\begin{abstract}
This paper introduces a discrete-time fractional Poisson process defined as a renewal process, where the waiting times follow a discrete Mittag-Leffler distribution. We investigate its fundamental properties by explicitly deriving the probability generating function of the waiting times and the exact probability distribution of the event counts. Through this analysis, we reveal that, unlike its continuous-time counterpart, our renewal-based model is not mathematically equivalent to the process constructed via subordination using the Sibuya distribution.
\end{abstract}

\newpage
\section{Introduction}

Research on the continuous-time fractional Poisson process is said to have originated with the works of \cite{repin2000fractional,jumarie2001fractional,laskin2003fractional}. 
While there are various approaches to defining the process, early studies defining it as a renewal process include, for instance, \cite{mainardi2004fractional}.
Later, this definition was shown to be equivalent to a time-changed standard Poisson process using an inverse stable process as a subordinator in \cite{meerschaert2011fractional}.

The purpose of this paper is to define a renewal process that can be regarded as a discrete-time fractional Poisson process, and to investigate its fundamental properties. By examining the probability generating function of the waiting times during this analysis, we demonstrate that, unlike the continuous-time case, the discrete-time fractional Poisson process defined as a renewal process is not identical to the one defined as a semi-Markov process via subordination (or time-change).

The structure of this paper is as follows. Section \ref{sec2} recalls the fundamentals of discrete fractional calculus. Section \ref{sec3} derives the probability generating function of the waiting times and the probability distribution of the number of events as fundamental properties of the discrete-time fractional Poisson process. Finally, Section \ref{sec4} provides a conclusion and discusses directions for future research.

\section{Fractional Calculus}
\label{sec2}

In this section, we summarize the necessary fractional calculus in a form suitable for this paper. We primarily follow the notation and framework of \cite{anastassiou2010nabla}.

For time $t\in\mathbb{Z}$ and a function $f(t)$, the time difference is denoted by
\begin{equation}
  \nabla f(t):=f(t)-f(t-1).
\end{equation}
The sum over time, acting as the inverse operator of the time difference, is defined for an integer $a\leq t$ as
\begin{equation}
  \nabla_a^{-1}f(t):=\sum_{u=a}^t f(u).
\end{equation}

\subsection{Discrete Power Function}

For $x\in \mathbb{R}\backslash\{0,-1,-2,\dots\}$ and $\alpha \in\mathbb{R}$ such that $x+\alpha \not\in \{0,-1,-2,\dots\}$, we set
$$x^{\overline{\alpha}}:=\frac{\Gamma (x+\alpha)}{\Gamma(x)}.$$
Then, we have
\begin{align}
  \nabla x^{\overline{\alpha}}&=\frac{\Gamma(x+\alpha)}{\Gamma(x)} - \frac{\Gamma(x-1+\alpha)}{\Gamma(x-1)}
  \\
  &=\alpha \frac{\Gamma(x+\alpha -1)}{\Gamma(x)}=\alpha x^{\overline{\alpha -1}}.
\end{align}
Note also that $x^{\overline{0}}=1$.

\subsection{Generalized Binomial Coefficient}

When $x\in \mathbb{R}\backslash\{0,-1,-2,\dots\}$ and $\alpha\in \mathbb{R}\backslash\{-1,-2,\dots\}$ such that $x+\alpha\not\in \{0,-1,-2,\dots\}$, we express the generalized binomial coefficient as
\begin{align}
  \hat{h}_\alpha(x) :&=\frac{\Gamma (x+\alpha )}{\Gamma (\alpha +1) \Gamma (x)}=\frac{x^{\overline{\alpha}}}{\Gamma (\alpha +1)}
  \\
  &= \binom{x+\alpha-1}{x-1}=\binom{x+\alpha-1}{\alpha}.
\end{align}
When $\alpha\not\in \{0,-1,-2,\dots\}$ and $x+\alpha\not\in \{0,-1,-2,\dots\}$, it holds that
$$\hat{h}_{\alpha-1}(x) = \hat{h}_{x-1}(\alpha)=\frac{\Gamma (x+\alpha -1)}{\Gamma (\alpha ) \Gamma (x)}.$$
Note also that $\hat{h}_0(x)=1$ and $\hat{h}_\alpha(1)=1$.

Furthermore, $\hat{h}_\alpha(x)$ are the coefficients of the series expansion of the discrete exponential function $(1-\lambda )^{-x}$ for $|\lambda |<1$. 
Indeed, for $x\in\mathbb{R}\backslash\{0,-1,-2,\dots\}$, this can be seen from
\begin{align}
  (1-\lambda )^{-x}
  &=\sum_{m=0}^\infty \binom{x+m-1}{m}\lambda^m 
  = \sum_{m=0}^\infty \hat{h}_m (x)\lambda^m
  \\
  &=\sum_{n=1}^\infty \hat{h}_{n-1} (x)\lambda^{n-1}
  =\sum_{n=1}^\infty \hat{h}_{x-1} (n)\lambda^{n-1}.
  \label{eq:generating}
\end{align}

Also, for $\alpha \in\mathbb{R}\backslash \{0,-1,-2,\dots\}$, we have
\begin{equation}
  \nabla \hat{h}_{\alpha}(x)
  =\frac{\alpha x^{\overline{\alpha -1}}}{\Gamma (\alpha +1)} =\frac{ x^{\overline{\alpha -1}}}{\Gamma (\alpha )}=\hat{h}_{\alpha -1}(x).
\label{eq:sabun_h}
\end{equation}

\subsection{Fractional Sum}

When $m$ is a positive integer, let $\nabla_a^{-m}$ denote the $m$-fold application of $\nabla_a^{-1}$, i.e., 
$\nabla_a^{-m} f(t) = \sum_{u=a}^t \nabla_a^{-m+1}f(u)$.
From the properties of the generalized binomial coefficient, we see that
\begin{align}
  \nabla_a^{-m} f(t)
&=\sum_{u_1=a}^t \sum_{u_2=a}^{u_1}\cdots \sum_{u_m=a}^{u_{m-1}} f(u_m)
\\
&=\sum_{u=a}^t \hat{h}_{m-1}(t-u+1)f(u).
\end{align}

Generalizing this, for a positive real number $\alpha >0$, $t\in\mathbb{Z}$, and an integer $a\leq t$, we define
$$\nabla_a^{-\alpha} f(t):=\sum_{u=a}^t \hat{h}_{\alpha -1}(t-u+1)f(u).$$

\subsection{Fractional Difference}

When $m$ is a positive integer, let $\nabla^m$ denote the $m$-fold application of $\nabla$, i.e., 
$\nabla^m f(t)=\nabla^{m-1} f(t)-\nabla^{m-1}f(t-1)$.
In this case, by repeatedly applying \eqref{eq:sabun_h}, we have for $\alpha \in\mathbb{R}$
\begin{equation}
  \nabla^m \hat{h}_\alpha (t)=\begin{cases}
  \hat{h}_{\alpha -m}(t), & \text{if }\alpha \not \in \{1,2,\dots,m-1\},
  \\
  0, & \text{if }\alpha \in \{1,2,\dots, m-1\}.
\end{cases}
\end{equation}

For a positive real number $\alpha >0$, the fractional difference $\nabla_a ^\alpha$ is defined using an integer $m$ such that $m-1< \alpha \leq m$ as
\begin{align}
  \nabla_a ^\alpha f(t)&=\nabla^m \nabla_a ^{-(m-\alpha)}f(t)
  \\
  &=\nabla^m \sum_{u=a}^t \hat{h}_{m-\alpha -1}(t-u+1)f(u).
\end{align}
This is known as the Riemann-Liouville type discrete fractional difference, which we adopt in this paper. 
On the other hand, 
${}^C\nabla_a ^\alpha f(t)=\nabla_a^{-(m-\alpha)} \nabla^m f(t)$ 
is called the Caputo type discrete fractional difference.

\subsection{Discrete Mittag-Leffler Function}

We recall the discrete version of the Mittag-Leffler function defined in Definition 5 of \cite{nagai2003discrete}.

For a real number $q>0$, we shall call
\begin{equation}
 F_{q,\lambda}(t):=\sum_{m=0}^\infty \hat{h}_{qm}(t)\lambda^m=\sum_{m=0}^\infty \frac{t^{\overline{qm}}}{\Gamma(qm+1)}\lambda^m 
\end{equation}
the discrete Mittag-Leffler function.
When $q=1$, it becomes $F_{1,\lambda}(t)=(1-\lambda)^{-t}$, which coincides with the discrete exponential function.

We now prepare a convolution formula. 
For $\alpha \in\mathbb{R}\backslash\{0,-1,-2,\dots\}$ and $0<q\leq 1$, from the fact that
\begin{equation}
  (1-\lambda)^{-(\alpha - q + 2)}=\sum_{t=1}^\infty \hat{h}_{\alpha -q+1}(t)\lambda^{t-1}
\end{equation}
and
\begin{align}
  (1-\lambda)^{q-1}(1-\lambda)^{-(\alpha +1)}
  &=\sum_{v=1}^\infty \hat{h}_{-q}(v)\lambda^{v-1}\sum_{u=1}^\infty \hat{h}_{\alpha}(u)\lambda^{u-1}
  \\
  &=\sum_{t=1}^\infty  \sum_{u=1}^t  \hat{h}_{-q}(t-u+1) \hat{h}_{\alpha}(u) \lambda^{t-1},
\end{align}
we obtain by comparing the coefficients for $t\in\{1,2,\dots\}$:
\begin{equation}
 \sum_{u=1}^t  \hat{h}_{-q}(t-u+1) \hat{h}_{\alpha}(u)=\hat{h}_{\alpha -q+1}(t). 
\end{equation}

Furthermore, from this, the following fractional difference formula is obtained:
\begin{align}
  \nabla_1^q \hat{h}_{\alpha}(t)
  &= \nabla \nabla_1^{-(1-q)}\hat{h}_{\alpha}(t)
  \\
  &= \nabla \sum_{u=1}^t \hat{h}_{-q}(t-u+1)\hat{h}_{\alpha}(u)
  \\
  &=\nabla \hat{h}_{\alpha -q+1}(t)
  =\hat{h}_{\alpha -q}(t).
\end{align}
From this, for the fractional difference of the discrete Mittag-Leffler function, we see that
\begin{align}
  \nabla_1^q F_{q,\lambda}(t)
  &=\sum_{m=0}^\infty \nabla_1^q \hat{h}_{qm}(t)\lambda^m
  \\
  &=\sum_{m=0}^\infty  \hat{h}_{q(m-1)}(t)\lambda^m
  \\
  &=\hat{h}_{-q}(t)+\sum_{m=0}^\infty  \hat{h}_{qm}(t)\lambda^{m+1}
  \\
  &=\hat{h}_{-q}(t)+ \lambda F_{q,\lambda}(t).
\end{align}

\section{Discrete Fractional Poisson Process}
\label{sec3}

Let $T_i^q~(i=1,2,\dots)$ be independent and identically distributed (i.i.d.) random variables, sharing the following discrete Mittag-Leffler distribution. That is, supported on $\{1,2,\dots\}$, with $0<\lambda <1,~0<q\leq 1$, and for $t=1,2,\dots$, let
$P(T^q_i> t)=F_{q,-\lambda}(t)$.
Let $S_0=0$ and $S_n^q=T_1^q+T_2^q+\dots+T_n^q~(n=1,2,\dots)$. We define the discrete fractional Poisson process as
$$N^q(t)=\max\{n\geq 0|S^q_n\leq t\}.$$

\subsection{Probability Generating Function of the Waiting Time}

We derive the probability generating function (PGF) of the waiting time from one event to the next.
\begin{theorem}
For $-1\leq z\leq 1$,
 \begin{equation}
   G^q(z)=E[z^{T_1^q}]=\frac{\lambda z}{(1-z)^q + \lambda}.
   \label{eq:PGFofT}
 \end{equation}
\end{theorem}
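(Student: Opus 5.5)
We compute the PGF from the survival function. Since $T_1^q$ is supported on $\{1,2,\dots\}$, set $\bar F(t)=P(T_1^q>t)=F_{q,-\lambda}(t)$ for $t\geq 1$ and $\bar F(0)=1$. Then $P(T_1^q=t)=\bar F(t-1)-\bar F(t)$, and summation by parts gives, for $|z|<1$,
\begin{equation}
  G^q(z)=\sum_{t=1}^\infty (\bar F(t-1)-\bar F(t))z^t = z - (1-z)\sum_{t=1}^\infty \bar F(t) z^{t}.
\end{equation}
So everything reduces to the generating function $\Phi(z):=\sum_{t=1}^\infty F_{q,-\lambda}(t)z^{t-1}$. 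We then need to show $z-(1-z)z\Phi(z)=\lambda z/((1-z)^q+\lambda)$, which is equivalent to
\begin{equation}
  \Phi(z)=\frac{(1-z)^{q-1}}{(1-z)^q+\lambda}.
\end{equation}

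To compute $\Phi$, we insert the series definition of $F_{q,-\lambda}$ and use \eqref{eq:generating} termwise. From \eqref{eq:generating} with $x=qm+1$ we get $\sum_{t\ge1}\hat h_{qm}(t)z^{t-1}=(1-z)^{-(qm+1)}$. Interchanging the two sums then gives
\begin{equation}
  \Phi(z)=\sum_{m=0}^\infty(-\lambda)^m(1-z)^{-qm-1}=\frac{(1-z)^{-1}}{1+\lambda(1-z)^{-q}},
\end{equation}
which is exactly the required expression. This yields \eqref{eq:PGFofT} for $z$ in a suitable range.

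An alternative route, closer in spirit to Section \ref{sec2}, is to generating-transform the identity $\nabla_1^q F_{q,-\lambda}(t)=\hat h_{-q}(t)-\lambda F_{q,-\lambda}(t)$. The convolution structure of $\nabla_1^{-(1-q)}$ together with $\nabla$ turns it into $(1-z)^{q}\Phi(z)=(1-z)^{q-1}-\lambda\Phi(z)$, where one must track the boundary term coming from $\nabla$ at $t=1$. This gives the same formula without manipulating a double series.

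\textbf{Main obstacle.} The hard part is justifying the interchange of summation, and the resulting range of validity. The geometric series in $\lambda(1-z)^{-q}$ converges only when $\lambda<|1-z|^q$, i.e.\ for $z$ away from $1$. We therefore plan to prove the identity first on a small region, such as $z$ near $-1$ or negative $z$ with $|1-z|^q>\lambda$. There, absolute convergence of $\sum_m\sum_t|\hat h_{qm}(t)|\lambda^m|z|^{t-1}$ holds because $\hat h_{qm}(t)\ge0$ and the inner sum equals $(1-|z|)^{-(qm+1)}$. This requires $\lambda<(1-|z|)^q$, so we take $|z|$ small. We then extend to all $-1\le z\le1$ by analytic continuation. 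Both sides are analytic in $|z|<1$: the left side because it is a PGF, and the right side because $(1-z)^q+\lambda$ has positive real part there. Continuity up to $z=\pm1$ then follows by Abel's theorem, with $G^q(1)=1$ confirming that the distribution is proper.
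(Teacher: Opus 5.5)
Your proof is correct and follows essentially the paper's route. Both arguments expand $F_{q,-\lambda}$ termwise, transform each term with \eqref{eq:generating}, sum the resulting geometric series on the region $\lambda<(1-|z|)^q$, and extend to $-1\le z\le 1$ by analytic continuation and Abel's theorem.

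The differences are minor. You transform the survival function and recover $G^q$ by summation by parts, whereas the paper transforms the pmf $-\nabla F_{q,-\lambda}$ directly; term by term this is the same computation up to the factor $z(1-z)$. You also supply the Tonelli justification via $\hat{h}_{qm}(t)\ge 0$ and treat the endpoint $z=-1$, both of which the paper leaves implicit.

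Drop the passing suggestion of starting near $z=-1$: your absolute-convergence bound needs $|z|$ small, as you yourself then note.
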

\begin{proof}
For $u=1,2,\dots$, the probability mass function of $T_1^q$ is
\begin{align}
P(T_i^q=u) &= F_{q,-\lambda}(u-1) - F_{q,-\lambda}(u) \\
&= -\nabla F_{q,-\lambda}(u) \\
&= -\sum_{m=1}^\infty \hat{h}_{qm-1}(u)(-\lambda)^m.
\end{align}
Using this, the generating function $E[z^{T_1^q}]$ can be calculated as follows:
\begin{align}
  E[z^{T_1^q}] &= \sum_{u=1}^\infty z^u P(T_1^q=u) \nonumber \\
  &= - \sum_{u=1}^\infty z^u \left( \sum_{m=1}^\infty \hat{h}_{qm-1}(u)(-\lambda)^m \right)
\\
  &= - \sum_{m=1}^\infty (-\lambda)^m \sum_{u=1}^\infty \hat{h}_{qm-1}(u) z^u
  \label{eq:pgf_step1}
\\
  &= - \sum_{m=1}^\infty (-\lambda)^m z(1-z)^{-qm} \nonumber \\
  &= - z \sum_{m=1}^\infty \left( -\lambda (1-z)^{-q} \right)^m.
\end{align}
Assuming $|z|<1-\lambda^{1/q}$ and calculating the infinite geometric series, we obtain
\begin{align}
  E[z^{T_1^q}] &= - z \frac{-\lambda (1-z)^{-q}}{1 - \left( -\lambda (1-z)^{-q} \right)} \nonumber \\
  &= \frac{\lambda z}{(1-z)^q + \lambda}.
\end{align}
This obtained closed-form expression can be analytically continued to the region $|z|<1$, and by Abel's theorem, taking the limit as $z \to 1$ justifies its value at $z=1$, which yields the assertion.
\end{proof}

In particular, for the standard discrete Poisson process where $q=1$,
$$E[z^{T_1^1}] = \frac{\lambda z}{1 - z + \lambda} = \frac{\frac{\lambda}{1+\lambda} z}{1 - \frac{1}{1+\lambda}z},$$
which completely coincides with the probability generating function of the geometric distribution (for the number of trials up to the first success) with success probability $p = \frac{\lambda}{1+\lambda}$.

Using the probability generating function obtained above, we calculate the expected value $E[T_1^q]$:
\begin{align}
  \frac{d}{dz}G^q(z) &= \frac{\lambda \left( (1-z)^q + \lambda \right) - \lambda z \left( -q(1-z)^{q-1} \right)}{\left( (1-z)^q + \lambda \right)^2} \nonumber \\
  &= \frac{\lambda(1-z)^q + \lambda^2 + \lambda q z (1-z)^{q-1}}{\left( (1-z)^q + \lambda \right)^2}. \label{eq:G_prime}
\end{align}

For $0 < q < 1$, since $q-1 < 0$, we have $\lim_{z \to 1} (1-z)^{q-1} = \infty$. Therefore,
\begin{align}
  E[T_1^q] =\lim_{z \to 1}\frac{d}{dz}G^q(z) =  \infty \quad (0 < q < 1).
\end{align}
It can be seen that in this regime, the waiting time follows a heavy-tailed distribution.

For $q = 1$, since $(1-z)^{q-1} = (1-z)^0 = 1$, the limit does not diverge and can be evaluated as a finite value:
\begin{align}
  E[T_1^1] =\lim_{z \to 1}\frac{d}{dz}G^1(z)=  \frac{\lambda \cdot 0 + \lambda^2 + \lambda \cdot 1}{\lambda^2} = 1 + \frac{1}{\lambda}.
\end{align}
This perfectly matches the expected value $1/p$ of the geometric distribution with success probability $p = \frac{\lambda}{1+\lambda}$.

\begin{remark}
For the continuous-time fractional Poisson process, it is known that the definition as a renewal process is mathematically strictly equivalent to the one defined by subordinating a standard Poisson process using an inverse stable process as a subordinator (see \cite{meerschaert2011fractional}). However, in the discrete-time framework, it is important to note that these two approaches bifurcate into different probabilistic models.
In discrete time, the Sibuya distribution with parameter $0 < q \le 1$ plays the role corresponding to the subordinator in the continuous model (see, e.g., \cite{nichols2018subdiffusive}). When a random variable $X$ follows a Sibuya distribution with parameter $q \in (0,1)$, its probability mass function is defined using generalized binomial coefficients as follows:
\begin{equation}
    P(X = k) = (-1)^{k-1} \binom{q}{k}, \quad (k = 1, 2, 3, \dots).
\end{equation}
Its PGF is given by $S_q(z) = 1 - (1-z)^q$. The PGF of the waiting time for the standard discrete Poisson process ($q=1$) is $G_1(z) = \frac{\lambda z}{1-z+\lambda}$. If we apply subordination to this using the Sibuya distribution, the PGF $G_{sub}(z)$ of the new waiting time is calculated as a composite function:
\begin{align}
  G_{sub}(z) = G_1(S_q(z)) = \frac{\lambda (1 - (1-z)^q)}{(1-z)^q + \lambda}. \label{eq:remark_sub}
\end{align}
Comparing Equation \eqref{eq:remark_sub} with Equation \eqref{eq:PGFofT}, they share the identical denominator $(1-z)^q + \lambda$. This implies that both models share macroscopic fractional characteristics, such as the divergence of the expected value of the waiting time distribution.
However, the structure of the numerators, $1 - (1-z)^q$ and $z$, is distinctly different, indicating that they strictly follow different probabilistic laws on the microscopic discrete steps.
\end{remark}

\subsection{Probability Distribution of the Number of Events}

The probability mass function for the number of events $N^q(t)$ is obtained as follows.

\begin{theorem}
For $n=0,1,2,\dots$ and $t\geq n$,
  \begin{align}
  P(N^q(t)=n) 
  &= \lambda^n \sum_{m=0}^\infty \binom{n+m}{n} (-\lambda)^m 
  \cdot\Big[ \hat{h}_{q(n+m)}(t-n+1) + \lambda \hat{h}_{q(n+m+1)-1}(t-n+1) \Big].
\end{align}
\end{theorem}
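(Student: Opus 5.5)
Since the waiting times are i.i.d., the generating function in $t$ of $P(N^q(t)=n)$ can be computed from the PGF of Theorem above and then expanded. Write $p_n(t)=P(N^q(t)=n)=P(S_n^q\le t)-P(S_{n+1}^q\le t)$. Since $E[z^{S_n^q}]=G^q(z)^n$, the standard renewal identity gives, for $|z|<1$,
\begin{equation}
  \sum_{t=0}^\infty p_n(t)z^t=\frac{G^q(z)^n\bigl(1-G^q(z)\bigr)}{1-z}
  =\frac{\lambda^n z^n}{\bigl((1-z)^q+\lambda\bigr)^n}\cdot\frac{(1-z)^q+\lambda-\lambda z}{\bigl((1-z)^q+\lambda\bigr)(1-z)}.
\end{equation}
This follows from $\sum_t P(S\le t)z^t=E[z^S]/(1-z)$. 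The plan is to expand the right-hand side as a power series in $z$ and read off the coefficient of $z^t$. The factor $z^n$ explains the shift $t-n+1$ and the restriction $t\ge n$.

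For the expansion, I split the numerator as $(1-z)^q+\lambda(1-z)$, which gives
\begin{equation}
 \sum_{t}p_n(t)z^t=\lambda^n z^n\Bigl[\frac{(1-z)^{q-1}}{((1-z)^q+\lambda)^{n+1}}+\frac{\lambda}{((1-z)^q+\lambda)^{n+1}}\Bigr].
\end{equation}
Factoring $(1-z)^{-q(n+1)}$ out of $((1-z)^q+\lambda)^{-(n+1)}$ and using the negative binomial series
\begin{equation}
  (1+w)^{-(n+1)}=\sum_{m\ge0}\binom{n+m}{n}(-w)^m,\qquad w=\lambda(1-z)^{-q},
\end{equation}
valid for $|z|<1-\lambda^{1/q}$, yields
\begin{equation}
 \sum_{m\ge0}\binom{n+m}{n}(-\lambda)^m\Bigl[(1-z)^{-q(n+m)-1}+\lambda(1-z)^{-q(n+m+1)}\Bigr].
\end{equation}
By \eqref{eq:generating}, $(1-z)^{-x}=\sum_{s\ge1}\hat h_{x-1}(s)z^{s-1}$. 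So the coefficient of $z^{s-1}$ in the bracket is $\hat h_{q(n+m)}(s)+\lambda\hat h_{q(n+m+1)-1}(s)$. Multiplying by $z^n$ and setting $t=n+s-1$, i.e. $s=t-n+1$, gives exactly the claimed formula.

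The main obstacle is justifying the interchange of the $m$-sum with coefficient extraction. The geometric/binomial expansion converges only for $|z|<1-\lambda^{1/q}$, a disc of positive radius, and coefficient identification requires that the double series converge absolutely there. This follows because the coefficients of $(1-z)^{-x}$ are nonnegative for $x>0$, so absolute convergence reduces to evaluating at $|z|$, where $\lambda(1-|z|)^{-q}<1$. Uniqueness of power series coefficients on that small disc then suffices. A secondary check is the edge case $q(n+m)=0$ (that is, $n=m=0$): there $\hat h_0\equiv1$ matches $(1-z)^{-1}$, so no special treatment is needed. It also remains to confirm that the resulting series in $m$ converges for each fixed $t$, which holds since it is a coefficient of an absolutely convergent expansion.
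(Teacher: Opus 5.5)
Your proposal is correct and follows the paper's proof essentially step for step. The steps are the renewal identity $\sum_{t}P(N^q(t)=n)z^t=G^q(z)^n(1-G^q(z))/(1-z)$, the rewriting as $\lambda^n z^n[(1-z)^{q-1}+\lambda]((1-z)^q+\lambda)^{-(n+1)}$, the negative binomial expansion in $\lambda(1-z)^{-q}$, and coefficient extraction from $(1-z)^{-x}=\sum_{s\ge1}\hat{h}_{x-1}(s)z^{s-1}$. In addition, your absolute-convergence argument on $|z|<1-\lambda^{1/q}$, using the nonnegativity of the coefficients of $(1-z)^{-x}$ for $x>0$, justifies interchanging the $m$-sum with coefficient extraction, a step the paper leaves implicit.
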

\begin{proof}
The event that the counting process $N^q(t)$ is in state $n$ means that exactly $n$ arrivals occur up to time $t$, which can be expressed using the arrival times as follows:
\begin{align}
  P(N(t) = n) = P(S_n \le t) - P(S_{n+1} \le t).
\end{align}
Thus, letting $H^q_n(z) := \sum_{t=0}^\infty P(N(t)=n) z^t$, we have
\begin{align}
  H^q_n(z) &= \sum_{t=0}^\infty P(S_n \le t) z^t - \sum_{t=0}^\infty P(S_{n+1} \le t) z^t \nonumber \\
  &=\frac{E[z^{S_n}]}{1-z} - \frac{E[z^{S_{n+1}}]}{1-z}
  \\
  &= \frac{G^q(z)^n}{1-z} - \frac{G^q(z)^{n+1}}{1-z} \nonumber \\
  &= \frac{G^q(z)^n (1 - G^q(z))}{1-z} 
  \\
  &= \left( \frac{\lambda z}{(1-z)^q + \lambda} \right)^n \frac{(1-z)^{q-1} + \lambda}{(1-z)^q + \lambda}
\\
  &= \lambda^n z^n \left[ (1-z)^{q-1} + \lambda \right] \left( (1-z)^q + \lambda \right)^{-(n+1)}.
\end{align}
Here, by the negative binomial theorem,
\begin{align}
  \left( (1-z)^q + \lambda \right)^{-(n+1)} 
  &= (1-z)^{-q(n+1)} \left( 1 + \lambda (1-z)^{-q} \right)^{-(n+1)} \nonumber \\
  &= (1-z)^{-q(n+1)} \sum_{m=0}^\infty \binom{-(n+1)}{m} \lambda^m (1-z)^{-qm} \nonumber \\
  &= \sum_{m=0}^\infty \binom{n+m}{n} (-\lambda)^m (1-z)^{-q(n+m+1)}.
\end{align}
Therefore,
\begin{equation}
H^q_n(z) 
  = \lambda^n z^n \left[ \sum_{m=0}^\infty \binom{n+m}{n} (-\lambda)^m (1-z)^{-q(n+m)-1}   + \lambda \sum_{m=0}^\infty \binom{n+m}{n} (-\lambda)^m (1-z)^{-q(n+m+1)} \right]. \label{eq:Gk_expanded}
\end{equation}
Here, from \eqref{eq:generating}, for $t \ge n$,
\begin{itemize}
  \item the coefficient of $z^{t-n}$ in the first term $(1-z)^{-(q(n+m)+1)}$ is $\hat{h}_{q(n+m)}(t-n+1)$, and
  \item the coefficient of $z^{t-n}$ in the second term $(1-z)^{-q(n+m+1)}$ is $\hat{h}_{q(n+m+1)-1}(t-n+1)$.
\end{itemize}
Thus, it is found that the coefficient of $z^n$ in $H^q_n(z)$ is
\begin{align}
   \lambda^n \sum_{m=0}^\infty \binom{n+m}{n} (-\lambda)^m 
  \cdot\Big[ \hat{h}_{q(n+m)}(t-n+1) + \lambda \hat{h}_{q(n+m+1)-1}(t-n+1) \Big],
\end{align}
which completes the proof.
\end{proof}

In particular, for $n=0$,
\begin{align*}
  P(N^q(t)=0) 
  &= \sum_{m=0}^\infty (-\lambda)^m \hat{h}_{qm}(t+1) - \sum_{m=0}^\infty (-\lambda)^{m+1} \hat{h}_{q(m+1)-1}(t+1) \\
  &= \hat{h}_0(t+1) + \sum_{m=1}^\infty (-\lambda)^m \left[ \hat{h}_{qm}(t+1) - \hat{h}_{qm-1}(t+1) \right].
\end{align*}
From the difference formula \eqref{eq:sabun_h}, 
we have $\hat{h}_{\alpha}(t+1) - \hat{h}_{\alpha-1}(t+1) = \hat{h}_{\alpha}(t)$, hence
$$P(N^q(t)=0) = \sum_{m=0}^\infty (-\lambda)^m \hat{h}_{qm}(t) = F_{q,-\lambda}(t),$$
which completely coincides with the probability $P(T_1^q > t)$ that the first event occurs after time $t$.

\section{Conclusion}
\label{sec4}

In this paper, we defined a discrete-time fractional Poisson process by constructing a renewal process with the discrete Mittag-Leffler distribution as the distribution of the event waiting times, and derived the probability generating function of the waiting times as well as the probability distribution of the number of events. Through this analysis, we demonstrated that, unlike the continuous-time case, the discrete-time fractional Poisson process in this paper is not identical to the one defined as a semi-Markov process via subordination using the Sibuya distribution.

As a future direction, we aim to investigate long-range dependence, which is one of the main themes of fractional stochastic processes, by characterizing its autocorrelation structure.

Furthermore, this framework can also be applied to ruin probability theory in actuarial science. When claims exhibit a heavy-tailed property, the asymptotic behavior of an insurance company's survival probability can be categorized based on the magnitude relationship between the tail index and the parameter $q$. We hope to present these findings on another occasion.

\end{document}